\documentclass[12pt]{article}

\usepackage{amsmath}
\usepackage{amssymb}

\usepackage[T1]{fontenc} 
\usepackage{sets pace}
\usepackage{comment}
\usepackage{colortbl}
\definecolor{text1}{cmyk}{1,.65,0,0} 
\definecolor{text2}{rgb}{1,0,0} 
\definecolor{text3}{cmyk}{0,0,0,1} 
\definecolor{text4}{cmyk}{0,0,0,0.5} 
\definecolor{text5}{cmyk}{1.0,0.0,1.0,0} 

\usepackage{amsfonts}
\usepackage{amsthm}
\usepackage{graphicx}
\usepackage{caption}
\usepackage{subcaption}
\usepackage{latexsym}
\usepackage[most]{tcolorbox}
\makeatletter
\renewcommand{\@seccntformat}[1]
{\csname the#1\endcsname.\enspace}
\makeatother
\newtheorem{definition}{Definition}
\newtheorem{theorem}{Theorem}
\newtheorem{lemma}{Lemma}
\newtheorem{remark}{Remark}
\newtheorem{corollary}{Corollary}

\begin{document}

\large
\begin{center}
{\bf  On the Bayesian analysis of a non-identifiable Binomial model \footnote{ \today. }}   
\end{center}
\normalsize
\begin{center}
{\sc \'Eric Marchand } \\
{\it   Universit\'e de
    Sherbrooke, D\'epartement de math\'ematiques, Sherbrooke Qc,
    CANADA, J1K 2R1 \quad (e-mail: eric.marchand@usherbrooke.ca) } \\
\end{center}

\vspace*{-0.5cm}
\begin{abstract}
\noindent
We provide analysis for the posterior distribution and expectation of $(p_1, p_2)$ where $Y|p_1,p_2 \sim \hbox{Binomial}(n, p_1 p_2)$ and $ (p_1, p_2)$ is uniformly distributed on the unit square $[0,1]^2$.   We exhibit interesting expressions in terms of a truncated Beta distribution, a finite mixture of Beta distributions and harmonic numbers, and derive a simple large sample size $n$ approximation for the posterior expectations $\mathbb{E}(p_i|y)$ as well as for the normalization constant in the posterior joint density. 
\end{abstract}

\noindent  {\it AMS 2020 subject classifications: 62E15; 62F15; 62-08.} 
\vspace{0.1cm} 

\noindent {\it Keywords and phrases}: Bayesian posterior distribution; Beta distribution; Binomial distribution;   Harmonic mean.  

\section{Introduction}

In this note, we consider the Bayesian model:
\begin{equation}
\label{model}
Y|p_1,p_2 \sim \hbox{Binomial}(n, p_1 p_2) \; \hbox{with } \;  p_1, p_2 \sim^{\!\!\!\!\!\!\!\! i.i.d.} \mathcal{U}(0,1)\,, 
\end{equation}
and extract interesting representations relative to the Bayesian posterior distribution and expectation.  The above binomial problem arises in a basic setting with independent, but otherwise arbitrary, events $A$ and $B$ such that $\mathbb{P}(A)\,=\, p_1$ and $\mathbb{P}(B)\,=\, p_2$, and where $n$ independent replicates of the indicator function $\mathbb{I}_{A \cap B}$ are observed.    The inclusion of the uniform prior density appears in a recent paper (Surjanovic et al., 2025), where the authors introduce and illustrate the use of the software package Pigeons.jl,  which is designed to sample from difficult or intractable distributions including Bayesian posterior distributions. The authors refer to the above as a non-identifiable model for a  ``coinflip'' data set since the effects of the parameters $p_1$ and $p_2$ cannot be distinguished.

Although the problem is simply formulated, we report here on interesting features which include: (i) a mixture of Beta distributions representation for the posterior distribution of $p_1|y$, (ii) a weighted Beta distribution; which we introduce and define; for the posterior conditional distribution of $p_2|p_1,y$, (iii) posterior expectations $\mathbb{E}(p_i|y)$ expressed in terms of a harmonic mean, (iv) a large sample size $n$ approximation for the marginal distribution of $Y$ and related normalization constant, and (v) the large sample size $n$ approximation 
\begin{equation}
\mathbb{E}(p_1|Y=\alpha n)\, \approx \, \frac{1-\alpha}{-\ln(\alpha)}\,,
\end{equation}
which depends on $\alpha$ but not $n$.

Although model (1.1) is a simple one and well-formulated for computational evaluations, we believe that concise and elegant distributional findings such as those presented in this work are particularly welcomed, and potentially useful for the validation and implementation study of numerical algorithms. 
\section{Analysis}

\subsection{Marginal distribution and normalization constant}

Clearly, the variables $p_1$ and $p_2$ are interchangeable in model (\ref{model}) and can be equivalently characterized as non-identifiable in the sense that their posterior distributions for a given observation $y$ will necessarily match.   A collapsed version of the model is $Y|w \sim \hbox{Binomial}(n, w)$ with $W$ distributed as the product $p_1 p_2$.   This has density $f(w) \, = \, - \ln (w) \, \mathbb{I}_{(0,1)}(w)$ and moments $\mathbb{E}(W^k) \, = \, \frac{1}{(k+1)^2}, k \geq 0$.  For describing the marginal distribution of $Y$, the model is equivalent to $Y|W$ with $W =^d p_1 p_2$.  One can verify that $W$ has density $- \log (w) \, \mathbb{I}_{(0,1)}(w)$ and moments $\mathbb{E}(W^k) \, = \, \frac{1}{(k+1)^2}, k \geq 0$. With $\mathbb{E}(Y|w)\,=\, nw$ and $\mathbb{V}(Y|w)\,=\, nw(1-w)$, one obtains the marginal expectation and variance $\mathbb{E}(Y)\,=\, \frac{n}{4}$ and $\mathbb{V}(Y) \, = \, \frac{7 n^2}{144} + \frac{5n}{36}$.   The marginal probability mass function of $Y$ is as follows.
\begin{lemma}
\label{lemmamarginalY}
  The marginal probability mass function of $Y$ is given by
\begin{equation}
\label{marginalpmf}
p(y) \, = \, \frac{n!}{y!} \, \sum_{k=0}^{n-y}  \frac{(-1)^k}{k! (n-y-k)!} \, \frac{1}{(y+1+k)^2}\, \mathbb{I}_{\{0,1, \ldots, n\}}(y)\,.
\end{equation}  
{\bf Proof.} We have
\begin{eqnarray*}
p(y) \, & = & \, - \, \int_0^1  \binom{n}{y} \, w^y (1-w)^{n-y} \, \ln(w) \, dw \\
\, & = & \, - \, \binom{n}{y} \, \int_0^1 w^y \sum_{k=0}^{n-y} \, \binom{n-y}{k} \, (-w)^k \, \ln(w) \, dw \\  
\, & = &  \, - \,  \frac{n!}{y!} \, \sum_{k=0}^{n-y} \, \frac{(-1)^k}{k! \, (n-y-k)!} \, \int_0^1  w^{y+k}  \ln(w) \, dw\,,
\end{eqnarray*}
which leads to the result via the evaluation $\, - \int_0^1  w^{y+k} \, \ln(w) \, dw\,=\, \frac{1}{(y+1+k)^2}$. \qed
\end{lemma}
Observe that the posterior density of $(p_1, p_2)$ for a given $y$ is given by 
 $\pi(p_1,p_2|y) \,  =  \frac{1}{p(y)} \, \binom{n}{y} \, (p_1 p_2)^y \, (1- p_1p_2)^{n-y} \, \, \mathbb{I}_{(0,1)^2}(p_1,p_2) $, with the normalization constant $p(y)$ given in Lemma \ref{lemmamarginalY}.   A large sample $n$ approximation for this normalization constant will be obtained in section \ref{sectionalternative}.
 
\subsection{Posterior distribution of $(p_1, p_2)$}

To pursue with the posterior distributions, let us define a `weighted' Beta distribution as follows.
\begin{definition}
\label{definition}
A weighted Beta distribution with parameters $a>0, b>0, \lambda \in [0,1]$, denoted 
$Z \sim \mathbb{B}(a,b,\lambda)$ has density
\begin{equation}
\label{densityweightedbeta}
f(z) \, = \,   \frac{1}{C(a,b,\lambda)} \, z^{a-1} (1-\lambda z)^{b-1} \, \mathbb{I}_{(0,1)}(z)\,.
\end{equation}
\end{definition}
The above family of distributions includes standard Beta distributions $\mathbb{B}(a,b,1)$, reduce to $\mathbb{B}(a,1,1)$ distributions for $b=1$ \footnote{But these distributions are not identifiable in terns of $\lambda$.}, and contains $\mathbb{B}(a,1,1)$ distributions when $\lambda = 0$.  Moreover, these distributions are related through truncation and multiplication as
\begin{equation}
\label{representationconditional+truncation}
\frac{Z}{\lambda}| Z \leq \lambda =^d \mathbb{B}(a,b,\lambda) \,,
\end{equation}
for $Z \sim \mathbb{B}(a,b,1)$ and $\lambda>0$.
   
Notice also that the densities in Definition \ref{definition} have increasing monotone likelihood ratio (mlr) in $Z$ with parameter $\lambda$ for $b <1$, and decreasing mlr in $Z$ with parameter $\lambda$ for $b>1$.      Here are a couple of useful expressions for Definition \ref{densityweightedbeta}'s  normalization constant $C(a,b,\lambda)$ which we will require.
 
\begin{lemma}  We have for $a>0, b \in  \mathbb{N}_+$, and $\lambda \in [0,1]$:
\label{lemmaC}
\begin{equation}
\label{C1}
C(a,b,\lambda) \, = \, \Gamma(a) \, \Gamma(b) \, \sum_{k=0}^{b-1}  \, \frac{\lambda^{b-1-k} \, (1-\lambda)^k }{k! \, \Gamma(a+b-k)} \,,
\end{equation}
and
\begin{equation}
\label{C2}
C(a,b,\lambda) \, = \, \Gamma(a) \, \Gamma(b) \, \sum_{k=0}^{b-1}  \, \frac{\lambda^{k} \, (1-\lambda)^{b-1-k} }{\Gamma(b-k) \,\, \Gamma(a+k+1)}\,.
\end{equation}
\end{lemma}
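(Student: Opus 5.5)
We need $C(a,b,\lambda)=\int_0^1 z^{a-1}(1-\lambda z)^{b-1}\,dz$ for integer $b\ge 1$. The plan is to write $1-\lambda z = (1-\lambda) + \lambda(1-z)$ and expand by the binomial theorem, which turns each term into a standard Beta integral. The two expressions (\ref{C1}) and (\ref{C2}) will then come from the two natural ways of indexing the same finite sum.

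Since $b-1$ is a nonnegative integer,
\begin{equation*}
(1-\lambda z)^{b-1} \,=\, \sum_{j=0}^{b-1} \binom{b-1}{j} (1-\lambda)^{j}\, \lambda^{b-1-j}\, (1-z)^{b-1-j}.
\end{equation*}
Integrating term by term against $z^{a-1}$ on $(0,1)$ gives
\begin{equation*}
\int_0^1 z^{a-1}(1-z)^{b-1-j}\,dz \,=\, \frac{\Gamma(a)\,\Gamma(b-j)}{\Gamma(a+b-j)}.
\end{equation*}
Combining this with $\binom{b-1}{j} = \frac{\Gamma(b)}{j!\,\Gamma(b-j)}$, the factor $\Gamma(b-j)$ cancels, leaving
\begin{equation*}
C(a,b,\lambda) \,=\, \Gamma(a)\Gamma(b)\sum_{j=0}^{b-1} \frac{\lambda^{b-1-j}(1-\lambda)^j}{j!\,\Gamma(a+b-j)}.
\end{equation*}
This is exactly (\ref{C1}) with $k=j$. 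The endpoint values $\lambda\in\{0,1\}$ cause no difficulty, with the convention $0^0=1$.

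For (\ref{C2}), the reindexing $k\mapsto b-1-k$ in (\ref{C1}) does not give (\ref{C2}): it produces
\begin{equation*}
\frac{\lambda^{k}(1-\lambda)^{b-1-k}}{\Gamma(b-k)\,\Gamma(a+k+1)},
\end{equation*}
because $(b-1-k)! = \Gamma(b-k)$ and $a+b-(b-1-k) = a+k+1$. So (\ref{C2}) really is just (\ref{C1}) with the summation order reversed, and no separate argument is needed.

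If an independent route to (\ref{C2}) is wanted, I would instead expand $1-\lambda z = (1-z)+z(1-\lambda)$, or integrate by parts repeatedly in $z$. As a sanity check, at $\lambda=1$ only the $k=b-1$ term of (\ref{C2}) survives, giving $\Gamma(a)\Gamma(b)/\Gamma(a+b)$, the usual Beta constant. At $\lambda=0$ only the $k=0$ term survives, giving $\Gamma(a)\Gamma(b)/(\Gamma(b)\Gamma(a+1)) = 1/a$. Both are correct.

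The only real obstacle is choosing the decomposition of $1-\lambda z$ that produces Beta integrals rather than an expansion in powers of $\lambda z$. The direct expansion $\sum_j \binom{b-1}{j}(-\lambda z)^j$ would give an alternating sum in $\lambda$ that does not match the stated form. Writing $1-\lambda z = (1-\lambda)+\lambda(1-z)$ resolves this, and everything after it is routine Gamma-function bookkeeping.
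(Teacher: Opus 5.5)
Your proof is correct and is the paper's argument: $\bigl((1-\lambda)+\lambda(1-z)\bigr)^{b-1}$ is the paper's $\lambda^{b-1}(1-z+\rho)^{b-1}$ with $\rho=(1-\lambda)/\lambda$, expanded binomially, integrated term by term as Beta integrals, and reindexed by $k\to b-1-k$ for (\ref{C2}). Your form also never divides by $\lambda$, so the endpoint $\lambda=0$ is covered directly. Fix the slip where you write that the reindexing ``does not give'' (\ref{C2}), since your own computation right after shows that it gives (\ref{C2}) exactly. Note also that your optional expansion $1-\lambda z=(1-z)+z(1-\lambda)$ yields a sum in powers of $1-\lambda$ alone rather than (\ref{C2}), whereas repeated integration by parts does reproduce (\ref{C2}) term by term.
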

{\bf Proof.}   Equation (\ref{C2}) follows from (\ref{C1}) with the change of variables $k \to b-1-k$ in the sum.   For (\ref{C1}), set $\rho=\frac{1-\lambda}{\lambda}$.   A binomial expansion, an interchanging of integral and sum, and a Beta integral yield:
\begin{eqnarray*}
C(a,b,\lambda) \, & = & \,  \int_0^1  z^{a-1} (1-\lambda z)^{b-1} \, dz \\
\, & = & \, \int_0^1  z^{a-1} \, \lambda ^{b-1} \, (1-z+\rho)^{b-1} \, dz \\
\, & = & \, \int_0^1  z^{a-1} \, \lambda ^{b-1} \, \sum_{k=0}^{b-1} \binom{b-1}{k}\, \rho^k \, (1-z)^{b-1-k} \,  dz \\
\, & = & \,  \lambda^{b-1} \, \sum_{k=0}^{b-1} \rho^k \, \binom{b-1}{k} \, \frac{\Gamma(a) \, \Gamma(b-k)}{\Gamma(a+b-k)}\,,
\end{eqnarray*}
which is equal to (\ref{C1}).  \qed

\begin{theorem}
\label{theorem}
For $Y|p_1,p_2 \sim \hbox{Binomial}(n, p_1 p_2)$ and uniform prior for $(p_1,p_2)$ on $[0,1]^2$, the posterior distribution of $(p_1,p_2)$ is interchangeable and admits the decomposition: 
\begin{equation}
\label{equationbetamixture}
(i) \; p_1|p_2,y \sim \mathbb{B}(y+1, n-y+1, p_2) \, \hbox{ with density } \,
(ii) \, \pi(p_2|y) \, = \, \sum_{k=0}^{n-y}  w_k \, f_{k+y+1, n-y-k+1}(p_2)\,,
\end{equation}
with $f_{a,b}$ being a $\mathbb{B}(a,b,1)$ density, and probabilities $w_k$ such that  
$w_k \propto (k+y+1)^{-1} \, \mathbb{I}_{\{0, \ldots, n-y\}}(k)$. 
\end{theorem}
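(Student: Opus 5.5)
The plan is to start from the joint posterior $\pi(p_1,p_2|y) \propto (p_1p_2)^y(1-p_1p_2)^{n-y}\,\mathbb{I}_{(0,1)^2}(p_1,p_2)$. Interchangeability is immediate because this kernel is symmetric in $(p_1,p_2)$. For part (i), I fix $p_2$ and read the kernel as a function of $p_1$. It is proportional to $p_1^{y}(1-p_2 p_1)^{n-y}$ on $(0,1)$. By Definition \ref{definition}, with $a=y+1$, $b=n-y+1$ and $\lambda=p_2$, this is exactly the $\mathbb{B}(y+1,n-y+1,p_2)$ density. So (i) holds, and the normalization constant is $C(y+1,n-y+1,p_2)$.

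For part (ii), I integrate out $p_1$. This gives
\[
\pi(p_2|y) \propto p_2^{y}\, C(y+1,n-y+1,p_2).
\]
Since $b=n-y+1\in\mathbb{N}_+$, Lemma \ref{lemmaC} applies. I use expression (\ref{C2}), which involves powers $\lambda^k(1-\lambda)^{b-1-k}$, with $\lambda=p_2$ and $b-1=n-y$. This yields
\[
\pi(p_2|y) \propto \sum_{k=0}^{n-y} \frac{p_2^{\,y+k}(1-p_2)^{n-y-k}}{\Gamma(n-y+1-k)\,\Gamma(y+k+2)},
\]
after dropping the constant factor $\Gamma(y+1)\Gamma(n-y+1)$.

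Next, each summand $p_2^{y+k}(1-p_2)^{n-y-k}$ is an unnormalized Beta kernel with parameters $(k+y+1,\,n-y-k+1)$. I multiply and divide it by the Beta function
\[
B(k+y+1,n-y-k+1)=\frac{\Gamma(k+y+1)\Gamma(n-y-k+1)}{\Gamma(n+2)}.
\]
The $k$-th term then becomes $f_{k+y+1,n-y-k+1}(p_2)$ times
\[
\frac{\Gamma(k+y+1)}{\Gamma(y+k+2)\,\Gamma(n+2)}=\frac{1}{(k+y+1)\,\Gamma(n+2)}.
\]
Here the factor $\Gamma(n-y-k+1)$ cancels against $\Gamma(n-y+1-k)$. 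So the mixture weights satisfy $w_k\propto (k+y+1)^{-1}$, and normalizing them to sum to one gives (ii).

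The only real bookkeeping risk is choosing the right form of Lemma \ref{lemmaC} and tracking the cancellation of the Gamma factors. Form (\ref{C2}) is the one whose $\lambda$-power combines with $p_2^y$ to give the Beta kernels. I would double-check the result by comparing the total normalizing sum, $\frac{1}{\Gamma(n+2)}\sum_k (k+y+1)^{-1}$ times constants, against $p(y)$ from Lemma \ref{lemmamarginalY}, though this check is not needed for the proof.
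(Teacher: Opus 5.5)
Your proposal is correct and follows essentially the same route as the paper: you read off (i) from the $p_1$-kernel with normalizer $C(y+1,n-y+1,p_2)$, then apply expression (\ref{C2}) of Lemma \ref{lemmaC} to $p_2^y\,C(y+1,n-y+1,p_2)$ and recognize Beta kernels whose Gamma factors reduce to weights proportional to $(k+y+1)^{-1}$. Your Gamma bookkeeping checks out, and your symmetry remark supplies the justification for interchangeability, which the paper only asserts.
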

{\bf Proof.}   We have for the joint posterior density
\begin{eqnarray*}
\pi(p_1,p_2)|y \, & \propto & \,  \binom{n}{y} \, (p_1 p_2)^y \, (1- p_1p_2)^{n-y} \, \, \mathbb{I}_{(0,1)^2}(p_1,p_2) \\
 \, & \propto & \, \frac{p_1^y \, (1- p_1 p_2)^{n-y}}{C(y+1, n-y+1, p_2)} \,\, p_2^y  \, C(y+1, n-y+1, p_2) \,,
\end{eqnarray*}
which implies (i).   Moreover, we have from (\ref{C2})
\begin{eqnarray*}
\pi(p_2 |y)  \, & \propto \, &  p_2^y  \, \, C(y+1, n-y+1, p_2) \\
\, & \propto \, & \sum_{k=0}^{n-y}  \frac{\Gamma(n+2) \, p_2^{y+k} \, (1-p_2)^{n-y-k}}{\Gamma(n-y-k+1) \, \Gamma(y+k+1)} \,  \	\times \frac{1}{y+k+1} \,,
\end{eqnarray*}
which yields (\ref{equationbetamixture})\,. \qed

Theorem \ref{theorem} holds exactly for all $(n,y)$ and establishes a Beta mixture representation for the marginal posterior distributions of $p_2$ and $p_1$.   It is a discrete mixture with mixing variable taking on $n-y+1$ values with probabilities proportional to consecutive elements of a harmonic sequence.  

\begin{remark} 
\label{remarksimulation}
Theorem \ref{theorem} suggests two simple options for simulating from the posterior distribution of $(p_1,p_2)$.  On one hand, the full conditionals are available for the Gibbs sampler, i.e., (i) and its twin version $p_2|p_1,y \sim \mathbb{B}(y+1, n-y+1, p_1) \,$ which requires simulating from a weighted Beta distribution, which in turn can be done through its cdf, or via representation (\ref{representationconditional+truncation}) and a truncated Beta distribution.   Alternatively, parts (i) and (ii) combined generates exactly one observation under the posterior, with generators of a weighted Beta distributed variable, the discrete variable $K$ according to the probabilities $w_k$, and a Beta distributed $B(k+y+1, n-y-k+1)$ variable.    
\end{remark}
 We point out that the case $y=n$, reminiscent of Laplace's celebrated problem of succession where the product $U=p_1p_2$ is uniformly distributed on $(0,1)$, simplifies to independent components $p_1$ and $p_2$ under the posterior distribution, both distributed as $\mathbb{B}(n+1,1)$. Coincidentally, in Laplace's problem, one also obtains a $\mathbb{B}(n+1,1)$ distribution for the posterior distribution of $U|n$.    However, the above coincidence is a manifestation of a more general property, specific to the posterior distribution for $Y=n$, where $Y|p_1, \dots, p_k \sim \hbox{Binomial}\big(n, \prod_{i=1}^k p_i\big)$ and $p_1, \dots, p_k$ has prior density of the form $\prod_{i=1}^k g_i(p_i)$.  Indeed, for the above situation, one obtains directly independently distributions components $p_i$ under the posterior, with densities
\begin{equation}
\pi(p_i|n) \, \propto   \, p_i^n \, g_i(p_i) \, \hbox{ for } i=1,\dots, k\,,
\end{equation}
also irrespectively of $k$.

\subsection{Posterior expectation}

We now turn our attention towards the posterior expectation $\mathbb{E}(p_2|y)$ with  the following representation bringing into play a harmonic mean and leading to a simple large $n$ approximation.
\begin{corollary}
\label{posteriorexpectation}
Consider model (\ref{model}) and suppose that $Y=y$ has been observed.
\begin{enumerate}
\item[ (a)]   The marginal posterior expectation of $p_2$ (or $p_1$) is given by
\begin{equation}
\label{expectationharmonic1}   \mathbb{E}(p_2|y) \, = \, \frac{1}{n+2} \; \mu_H\{y+1, y+2, \ldots, n+1  \}\,,
\end{equation}
or, alternatively,
\begin{equation}
\label{expectationharmonic2}
\mathbb{E}(p_2|y) \, = \, \frac{(n-y+1)/(n+2)}{H_{n+1} - H_y}, 
\end{equation}
where $\mu_H$ is the harmonic mean given by $\mu_H\{a_1, \ldots, a_N  \}\, = \, \big\{\frac{1}{N} \sum_{i=1}^N a_i^{-1}   \big\}^{-1}$ for positive real numbers  $a_1, \ldots, a_N$, and where $H_m\,=\,1+ \frac{1}{2}+ \cdots + \frac{1}{m}$ is the $m^{th}$ harmonic number. 

\item[ (b)]  For $\alpha \in (0,1)$, we have the liming expression
\begin{equation}
\label{expectationlimiting}
\lim_{n \to \infty} \mathbb{E}\big(p_2|Y=\,\lfloor \alpha n \rfloor \big) \, = \frac{1-\alpha}{- \ln (\alpha)}\,,
\end{equation}
where $\lfloor a \rfloor$ is the integer part of $a>0$.
\end{enumerate}
{\bf Proof.}
\begin{enumerate}
\item[ (a)]   Expression (\ref{expectationharmonic1}) follows directly from (\ref{equationbetamixture}) with  $\mathbb{E}(p_2|y) \, = \,  \frac{\sum_{k=0}^{n-y}  w_k \, \frac{k+y+1}{n+2}}{\sum_{k=0}^{n-y}  w_k}$ and   
$w_k \, \propto \, (k+y+1)^{-1}$, while (\ref{expectationharmonic2}) follows from part (a) since $\mu_H\{y+1, y+2, \ldots, n+1  \}\,=\,  \frac{n-y+1}{H_{n+1}-H_y}$.

\item[ (b)]  Since $\lim_{m \to \infty} \big\{H_m - \ln (m)\big\} = \gamma$, $\gamma = 0.577216..$ being Euler's constant, we obtain from (\ref{expectationharmonic2})
\begin{align}
\lim_{n \to \infty} \mathbb{E}(p_2|Y\, = \lfloor \alpha n\rfloor  \,) \, & = \, \frac{\lim_{n \to \infty} (n-\lfloor \alpha n\rfloor +1)/(n+2)}{\lim_{n \to \infty} \big(H_{n+1} - H_{\lfloor \alpha n\rfloor }\big)} \, \\
\,& =  \lim_{n \to \infty} \big\{\frac{1-\alpha}{\ln(n+1) - \ln(\lfloor \alpha n\rfloor) }\big\} =\frac{1-\alpha}{- \ln (\alpha)}\,.  \qed 
\end{align}
\end{enumerate}
\end{corollary}
The above makes large sample approximations straightforward.  For $n= 100 000$ and $y= 50 000$ as evoked by Surjanovic et al. (2025), or more generally with $Y= \frac{n}{2}$, we have $\mathbb{E}(p_2|Y=\frac{n}{2}) \approx  \frac{1}{2 \ln 2} \,=\,0.721348...$ for large $n$.

\subsection{Alternative approach and a large $n$ approximation for the normalization constant}
\label{sectionalternative}

An alternative approach is to reparametrize as $\theta=p_1$ and $\theta_2=p_1 p_2$.  In this case, all of the prior parametric information for the model resides with $\theta_2$, and one obtains the densities
\begin{equation}
\nonumber  \pi(\theta_1|\theta_2, y) = -\frac{1}{\theta_1 \ln (\theta_2) }\, \mathbb{I}_{(\theta_2,1)}(\theta_1),
\end{equation}
independently of $y$, and 
\begin{equation}
\nonumber
\pi(\theta_2|y) \,= \, \frac{\binom{n}{y} \,\theta_2^y \, (1-\theta_2)^{n-y} \,\, (-\ln \theta_2)}{p(y)} \, \, \mathbb{I}_{(0,1)}(\theta_2)\,.
\end{equation}
In terms of the posterior expectation, observe that $\mathbb{E}(\theta_1|\theta_2, y) \, = \, \frac{\theta_2 -1}{\ln \theta_2}$ which leads to
\begin{eqnarray}
\nonumber \mathbb{E}(p_1|y) \, = \, \mathbb{E}(\theta_1|y) \, & = &\, \mathbb{E} \big(\frac{\theta_2 -1}{\ln \theta_2}  \, \big| y \big) \\
\nonumber & = &\, \binom{n}{y} \, \,\frac{ \int_0^1   \theta_2^y \, (1-\theta_2)^{n-y+1}  \, d\theta_2}{p(y)} \\
\label{expressionp(y)} & = &\, \frac{n-y+1}{(n+1) \, (n+2) \, p(y)}\,.
\end{eqnarray}
To conclude, we return to the findings of Surjanovic et al. (2025) and the various numerical evaluations and simulations which they provide for the problem considered here.   Namely, with a central issue in Bayesian computing being the evaluation of the normalization constant, or its logarithm, for a given $(n,y)$ given by $p(y)$ in (\ref{marginalpmf}).   Making use of (\ref{expressionp(y)}),  (\ref{expectationharmonic1}), and (\ref{expectationlimiting}), we obtain the exact value
\begin{eqnarray*}
p(y) \,  = \,  \frac{n-y+1}{(n+1) \, \mu_H\{y+1, \dots, n+1 \}}\,,
\end{eqnarray*}
and for $y=\alpha n$ the large samples size $n$ approximation
\begin{equation}
\nonumber
p(y) \, \approx   \frac{n-n\alpha+1}{(n+1) (n+2) \, \mathbb{E}(p_1|y) } \approx \frac{- \ln(\alpha)}{n}\,.
\end{equation}
As an illustration for $y=50 000$ and $n=100 000$, the above yields   $\ln p(50 000) \, \approx \, \ln\big\{(\ln 2) \times 10^{-5} \big\} \, = \, -11.879438..$ which corroborates Surjanovic et al. (2025)'s given value $\ln p(50 000) \approx -11.8794...$.
Finally, we point out that the marginal probability mass function of $Y$ is well-approximated by the above probability density function (pdf) $p(y)$ viewed as a function of $y$ with $\alpha=y/n$, which also implies that $X=Y/n$ has approximating pdf  $f(x)\,=\, -\ln(x) \, \mathbb{I}_{(0,1)}(x)$.  This is indeed the prior density of $p_1 p_2$, and the matching follows also since $X|p_1, p_2$ converges in probability to $p_1 p_2$, so that $X$ marginally converges in distribution to that of $p_1 p_2$.

\section*{Acknowledgements}
The author thanks Alexandre Bouchard-C\^ot\'e and Jim Berger for useful exchanges. The author is grateful for research support from NSERC of Canada.

\end{document}